\newcommand{\N}{{\mathbb{N}}}
\newcommand{\Of}{\mathcal{O}_f}
\newcommand{\R}{{\mathbb{R}}}
\newcommand{\T}{{\mathbb{T}}}
\newcommand{\Z}{{\mathbb{Z}}}
\DeclareMathOperator{\Deck}{Deck}
\DeclareMathOperator{\fix}{Fix}
\DeclareMathOperator{\id}{Id}
\DeclareMathOperator{\per}{Per}
\title[A survey on the growth rate inequality for sphere endomorphisms]{A survey on the growth rate inequality for sphere\\endomorphisms}
\authors{Juliana Xavier}
\abstract{%
    We survey recent results and current challenges concerning the growth rate inequality for sphere endomorphisms, and present a number of open problems and conjectures arising in this context.
    }
\keywords{growth rate inequality, sphere endomorphisms, periodic points, branched coverings, Thurston maps}
\begin{document}

\section{Introduction}

The growth rate inequality

\begin{equation}\label{tasa}
 \limsup_{n\to\infty} \frac{1}{n} \log (\# \fix (f^n))\geq \log (d)
\end{equation}

For a dynamical system $f$ of positive degree $d$, the inequality is both simple and natural. For instance, if
\[
f:S^1\to S^1
\]
is continuous with degree $d$, the inequality is easily seen to be true. Indeed, the graph of a lift
\[
\tilde f:\R\to \R
\]
must intersect at least $d-1$ times the set
\[
\{y=x+n : n\in \Z\}.
\]
This yields at least $d^n-1$ fixed points for the map $f^n$.

However, in contrast, one can easily construct a continuous map
\[
f:S^2\to S^2
\]
of degree $d$ for which inequality \eqref{tasa} does not hold. In polar coordinates on $\R^2$, consider
\[
(r,\theta)\mapsto (dr,d\theta).
\]
Extending this map to the sphere by setting
\[
\infty \mapsto \infty,
\]
we obtain a map of degree $d$ with only two periodic points.

More interestingly, to this day there is no well-understood list of obstructions for the inequality to hold on the \textit{two-dimensional sphere}. Moreover, in trying to understand this simple yet fundamental question in low-dimensional dynamics, several other significant open problems arise.
 
When an endomorphism $f$ satisfies inequality \eqref{tasa}, we say that $f$ \textit{has the rate}.

This paper provides an introductory survey of the basic results obtained to date on this ``rate'' problem, and presents several related open questions and directions for further research.

To finish this introductory section, I would like to add that it was Mike Shub that grew awareness around the growth rate inequality problem within the endomorphism group working 
in Montevideo, Uruguay, during one of his many stays in our country.  We are deeply thankful for his contribution, which resulted in many years of both joy and challenge in our work.

\section{\texorpdfstring{$C^1$}{C1} scenario}

Although the growth rate inequality \eqref{tasa} is topological in nature, its historical roots lie within differentiable dynamics.  To borrow the precise cadence of the original work, 
below is the first sentence in \cite{ps}:

``The relationship between the long term dynamics of an endomorphism of a manifold and its long term effect on the algebraic topology of the
manifold can depend on the smoothness of the endomorphism.''

We have already seen that if $f$ is a continuous map, the inequality \eqref{tasa} may not hold (i.e. $f$ may not have the rate). On the other hand, if $f$ is a rational map, then
it has the rate (see \cite[Proposition 1]{s}). From \cite{ss} it follows that if $f$ is $C^1$, then it must have infinitely many distinct periodic points, but their growth rate remains unknown.  Of course this raises
the question as to whether certain regularity will imply that the map has the rate.  This is not the focus of the present paper.  Nonetheless, we present in this section a
very brief overview of the results regarding differentiable maps.

The following is stated in \cite[Theorem 1]{ps}:

\begin{theorem}\label{1ps} If $f:S^2\to S^2$ is a $C^1$ latitude-preserving endomorphism of degree $2$, then for each $n$, $f^n$ has at least $2^n$ fixed points.
\end{theorem}

The latitude preserving hypothesis means that $f$ carries each latitude into another latitude or to one of the poles. It need not
be a homeomorphism from one latitude to another.  Of course, it follows from Theorem \ref{1ps} that $f$ has the rate.

This result was extended and further explained in \cite{mis}.

These sets of hypotheses (smooth and preserving
some ``geographical'' (singular) foliation) were later considered in \cite{gmnp}, where the authors considered what
happens when one changes the ``geography'' and replaces the assumption of smoothness by another one.  More precisely, certain foliations with only one singularity are specified and
the map is required to be smooth near this ``pole'' in order to have the rate.

The research in \cite{gmnp} was carried out by studying ``toy models'' which may help one to understand
the mechanism of creating periodic points under the assumption of smoothness.  See \cite{gmnp2}, where the authors confirm
Shub's conjecture for smooth maps of $S^2$ preserving the longitudinal foliation.

These strong hypotheses of preserving a foliation and being of class $C^1$ were 
dropped and replaced by an assumption on the homological action of the map that generalizes the former series of results:

\begin{theorem}[\!\!\cite{nancys}] Let $f:S^2\to S^2$ be a continuous map such that $\deg f = d, |d|>1$. Assume the following hypotheses hold:
$(H1)$ $f$ has two attracting fixed points denoted by $N$ and $S$; let $A=S^2\setminus \{N,S\}$,
$(H2)$ if a loop $\gamma\subset A\cap f^{-1}(A)$ is homotopically trivial in $A$, then $f(\gamma)$ is also homotopically trivial in $A$.
Then $f$ has the rate.
\end{theorem}

As an immediate result we get the following:

\begin{corollary} 
Let $f : S^2 \to S^2$ be a $C^1$ endomorphism of degree $d$ and assume that $|d| > 1$. If
$f^{-1}(\{N, S\}) \subset\{N, S\}$, then $f$ has the rate.
\end{corollary}

This result was re-obtained in \cite{barcor} also as a corollary of their work on the growth rate inequality for higher dimensional spheres.

A crucial remark, that is responsible for these last results and that will be exploited in the more detailed discussion to follow, is that the $C^1$ hypothesis is only used to guarantee 
that the map is a sink around its critical points. It is natural then not to impose differentiability, but rather try to  understand what the real obstructions are for a sphere 
endomorphism to have the rate.

For instance, we have:

\begin{theorem}[\!\!\cite{lhc}] Let $f:S^2\to S^2$
be a degree $d$ map such that all its periodic orbits are isolated
as invariant sets. Then, if $f$ has no sources of degree $r$ with $|r| > 1$ (this happens, in particular,
if $f$ is $C^1$) we have that $f$ has the rate.
\end{theorem}

\section{Annulus maps}\label{ac}

Sphere endomorphisms are too large of a universe to handle. It is natural to focus on the simplest cases at the beginning.  Other than homeomorphisms, the simplest maps one can think of, 
in terms of their local properties, are covering maps. A \textit{covering map} $f:X\to Y$ is a continuous map such that every $y\in Y$ has an open neighborhood $V$ whose 
preimage is a disjoint union of  open sets and  each of them is mapped homeomorphically onto $V$. If, however,  one is interested in dynamics, one needs $X$ to be equal to $Y$, that
is, one 
wants a \textit{self-covering} $f:X\to X$ in order to be able to iterate the map. However, the only possible covering maps $f:S^2 \to S^2$ are homeomorphisms.  So, if one
wants to escape the invertible world, one needs to allow critical points to be present. The best generalization is the following:

\begin{definition} We say that $f:S^2\to S^2$ is a branched covering if for every point $y\in S^2$ there exists a topological disk $V$ containing $y$ such that $f^{-1}(V)$ can be written as a disjoint union $f^{-1}(V) = \cup_{i\in I} U_i$ of open sets $U_i$ with the following property:
For every $U = U_i$ there is a unique $x \in U \cap f^{-1}(y)$ and an integer $k\geq 1$ such that the restriction $f|_{ U\backslash\{x\}} : U\backslash\{x\} \to V \backslash\{y\}$ is a covering map of
degree $k$.
\end{definition}
 
The number $k\geq 1$ is called the \textit{local degree} of $f$ at $x$, and we will denote it by
$\deg(f, x)$. The set of \textit{critical points} of $f$ is
$\operatorname{Crit}(f) = \{x \in S^2 : \deg(f, x) \geq 2\}$, and the image of a critical point is called a \textit{critical value} of $f$. The \textit{multiplicity} of a critical point $x$ is
$\deg(f, x) - 1$.

When a branched covering $f:S^2\to S^2$ has exactly two critical points $\{p,q\}$ verifying $f^{-1}(\{p,q\})=\{p,q\}$,
then $f$ restricts to a self-covering of the open annulus. Indeed, the punctured sphere $S^2\setminus\{p,q\}$ is homeomorphic to the open annulus
\[
A=S^1\times \R.
\]
Moreover, the restriction
\[
f|_{S^2\setminus\{p,q\}}:S^2\setminus\{p,q\}\to S^2\setminus\{p,q\}
\]
is a covering map. For a sphere branched covering, this specific critical configuration is very restrictive.
Nonetheless, annulus coverings (or just maps) have their own interest, and their theory  was developed in \cite{iprx1}, \cite{iprx2} and \cite{iprx3}, very much focusing on the growth
rate inequality.
We review the key points of the theory in this section.

\subsection{Semiconjugacies}

The most basic question one can ask about a self-map $f:A\to A$ is whether it factors through a circle covering.  More precisely, let $m_d:S^1\to S^1$, $m_d(z)=z^d$, and let $g_*$ be the induced map on homology groups of a given map $g$. Does there exist a continuous map 
$h:A\to S^1$ such that $h_*=\id$ and that $hf=m_dh$?

We will show in this section that any continuous map of the \textit{closed} annulus of degree $d$ ($|d|>1$) is semi-conjugate to $z^d$ on $S^1$.  Moreover, if $f:A\to A$ has degree $d$ and $K\subset A$ is compact, connected,  essential (i.e. not contained in a topological disc of $A$) and invariant,  then $f|_K:K\to K$ is semi-conjugate to $z^d$ on $S^1$.  However, this is not true for maps acting on the \textit{open} annulus $A$ (as the ones arising from restricting a branched covering of the sphere with a set of two points that is totally invariant); even if they are covering maps. We refer the reader to Section \ref{contra} for an example of $f:A\to A$ a covering map of degree $d$ that is not semi-conjugate to $z^d$ acting on $S^1$.  
We denote $\tilde A = \R\times (0,1)$ the universal cover of $A$.

\begin{theorem}[\!\!\cite{iprx1}] \label{semi}
Let $A=S^1\times (0,1)$ and  $f:A\to A$ a continuous map such that $\deg(f)=d, |d|>1$.  Let $K\subset A$ be a compact set such that $f(K)\subset K$ and let
$F:\tilde A\to \tilde A$ be a lift of $f$.  Then, 
there exists a continuous map $H:\tilde K\to \R$ such that:

\begin{enumerate}
 \item $H((x,y)+(1,0))=H(x,y)+1$ for all $(x,y)\in \tilde K$
 \item $HF=dH$
 \item there exists $M>0$ such that $|H(x,y)-x|<M$ for all $(x,y)\in \tilde K$.
\end{enumerate}
\end{theorem}

\begin{proof}
First note that the space
\[
\begin{aligned}
\mathcal{H}=\{\,H:\tilde K\to \R \mid\;& H \text{ is continuous,} \\
& H((x,y)+(1,0))=H(x,y)+1 \\
& \text{for all } (x,y)\in \tilde K \,\}
\end{aligned}
\]
is a complete metric space with the supremum metric. Moreover, if we define
\[
T(H)=\frac{HF}{d}, \quad H\in \mathcal{H},
\]
we have:

\begin{enumerate}
 \item $T:\mathcal{H}\to \mathcal{H}$,
 \item $T$ is a contraction.
\end{enumerate}

To see this, we perform the following computations:
\begin{enumerate}

 \item $T(H)\in \mathcal{H}$ because
 \begin{align*}
    T(H)((x,y)+(1,0))
    &= \frac{HF}{d}((x,y)+(1,0)) \\
    &=\frac{HF((x,y)+(1,0))}{d} \\
    &=\frac{H(F(x,y)+(d,0))}{d} \\
    &=\frac{H(F(x,y))+d}{d} \\
    &= \frac{H(F(x,y))}{d}+1 \\
    &=T(H)(x,y)+1.
\end{align*}
 
\item Let $\rho$ be the supremum metric on $\mathcal{H}$. Then
\[
\begin{aligned}
\rho(T(H_1),T(H_2))
&= \sup_{(x,y)\in \tilde K} |T(H_1)(x,y)-T(H_2)(x,y)| \\
&= \sup_{(x,y)\in \tilde K} \left|\frac{H_1F}{d}(x,y)-\frac{H_2F}{d}(x,y)\right| \\
&\le \frac{1}{|d|}\sup_{(x,y)\in \tilde K} |H_1(x,y)-H_2(x,y)| \\
&< \rho(H_1,H_2).
\end{aligned}
\]
Hence $T$ is a contraction.
\end{enumerate}
It follows that $T:\mathcal{H}\to \mathcal{H}$ has a fixed point $H$. By definition, we obtain:

\begin{enumerate}
 \item $H((x,y)+(1,0))=H(x,y)+1$ for all $(x,y)\in \tilde K$,
 
 \item $HF=dH$.
\end{enumerate}

Moreover, since $H$ is continuous and periodic, there exists $M>0$ such that
\[
|H(x,y)-x|<M \quad \text{for all } (x,y)\in \tilde K.\qedhere
\]
\end{proof}

\begin{corollary} If $f$ is a degree $d$ continuous self-map of the closed annulus and $|d| > 1$, then
$f$ is semiconjugate to $m_d$ acting on $S^1$.
\end{corollary}

\subsection{Nielsen theory}

Naturally, we want to use the semiconjugacy to obtain periodic orbits (we are interested in an exponential growth rate of periodic orbits for sphere
endomorphisms).  When dealing with annulus maps, Nielsen theory gives us exactly what we need:

\begin{theorem}[\!\!\cite{iprx2}]
Let $f:A\to A$ be a continuous map where $A=S^1\times (0,1)$, let $\deg(f)=d$ and $|d|>1$. 
If $\fix(F)\neq\emptyset$ for every lift $F:\tilde A\to \tilde A$, then $\fix(f)\geq |d-1|$.
\end{theorem}

\begin{proof}
If $F:\tilde A\to \tilde A$ is a lift of $f$, then any other lift of $f$ is of the form 
$F+k: \tilde A \to \tilde A$, given by $(F+k)(x)= F(x)+(k,0)$ for some $k\in \Z$.

We denote $F_k=F+k$ and write $F(x)+(k,0)=F(x)+k$ for all $k\in \Z$.

For each $k\in \Z$, let $x_k\in \tilde A$ be such that $F_k(x_k)=x_k$. 
We want to count how many distinct points $p(x_k)$ there are, 
where $p:\tilde A \to A$ is the covering projection.  
To do this, we study the equation $x_k=x_l+j$, with $j,k,l\in \Z$.  
Observe that $x_k=x_l+j$ if and only if

\[
\begin{aligned}
x_k &= F_k(x_k)=F_k(x_l+j)=F_k(x_l)+dj \\
    &= F(x_l)+k+dj = F_l(x_l)-l+k+dj \\
    &= x_l - l + k + dj = x_k - j - l + k + dj,
\end{aligned}
\]
which holds if and only if 
\[
k-l = j - dj = j(1-d).
\]

That is, $p(x_k)=p(x_l)$ if and only if $k-l \in (1-d)\Z$.  
Therefore, the points $p(x_0), p(x_1), \ldots, p(x_{|d-1|-1})$ are all distinct, 
and hence $\fix(f)\geq |d-1|$.
\end{proof}

We are thus led to the problem of finding fixed points for lifts $F:\tilde A\to \tilde A$.  
Observe that $\tilde A$ is topologically a plane, and moreover, if  
$f:A\to A$ is a covering map, then $F:\tilde A\to \tilde A$ is a plane \textit{homeomorphism}.

This simple observation is key: if we are dealing with coverings, we can now apply the fixed point theory for homeomorphisms of the plane, which is of course highly developed. 
In particular, in the case where the orientation is preserved, we can use Brouwer's Theory.  And even in the general case, when the map $f:A\to A$ is just continuous, we have
Lefschetz index defined on simple closed curves that will come in handy.

\begin{theorem}[\!\!\cite{iprx2}]\label{ka} 
Let $f:A\to A$, where $A=S^1\times (0,1)$, be a covering map with $\deg(f)=d$ and $|d|>1$. 
Suppose moreover that there exists a compact essential continuum $K\subset A$ such that $f(K)\subset K$.  
Then $\fix(F)\neq\emptyset$ for every lift $F:\tilde A\to \tilde A$.
\end{theorem}

Note that this result is strictly a consequence of degree; an irrational rotation of the open annulus has no periodic points, and has every essential circle as a compact invariant subset. We include here the proof in the orientation preserving case, to show how it relates beautifully to Brouwer's theory:  

\begin{proof}{\em (orientation preserving case).}
Let $F:\tilde A\to \tilde A$ be a lift of $f$ and $H:\tilde K\to \R$ be given by Theorem \ref{semi}.  
Observe that $H^{-1}(0)\subset \tilde K\subset \tilde A$ is nonempty, since $K$ is essential.  
Moreover, it is compact and invariant; therefore, since we are assuming that $f$ preserves orientation,  
Brouwer's Fixed Point Theorem ensures that $\fix(F)\neq \emptyset$.
\end{proof}

When the orientation is reversed, we use Kuperberg's theorem \cite{krys} instead.  We refer the reader to \cite{iprx2} for details.

As a consequence, we have:

\begin{theorem}  Let $f:S^2\to S^2$ be such that it restricts to an annulus covering $f:A\to A$. Suppose, moreover that there exists an essential invariant continuum $K\subset A$ such that $f(K)\subset K$. Then, $f$ has the rate.
\end{theorem}

It is worth comparing this result against the fixed-point free degree $2$ covering example $(r,\theta)\mapsto
(2r,2\theta)$, where every point is wandering.  One may ask if the existence of a
non-wandering point is enough to assure the rate (or even the existence of a fixed point).  Example \ref{e4} in Section \ref{expls} shows that this is not the case.

\subsection{Lefschetz index}\label{Lefschetz}

There is another way to get fixed points for plane maps, that does not require the homeomorphism hypothesis: the Lefschetz fixed point theorem. We recall that the Lefschetz index $i(\gamma,F)$ is defined over a simple closed curve $\gamma$ that is disjoint from the fixed point set of $F$.  It equals the degree of the circle map $t\mapsto \frac{F(\gamma(t))-\gamma(t)}{||F(\gamma(t))-\gamma(t)||}$ (we are assuming here $t\mapsto \gamma(t)$ is a homeomorphism defined on $S^1$).

The following lemma is well-known:

\begin{lemma}\label{aldo} If $F:\R^2\to \R^2$ is continuous, and $\gamma \subset \R^2$ is a simple closed curve such that $i(\gamma,F)\neq 0$, then there exists a fixed point of $f$ in the bounded component of the complement of $\gamma$.
\end{lemma}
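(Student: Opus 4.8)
The plan is to deduce this from the classical degree/winding-number theory in the plane, since $i(\gamma, F)$ is by definition the degree of the displacement map $t \mapsto (F(\gamma(t)) - \gamma(t))/\|F(\gamma(t)) - \gamma(t)\|$. First I would argue by contradiction: suppose $F$ has no fixed point in $\bar U$, where $U$ is the bounded component of $\R^2 \setminus \gamma$. Since $\gamma$ is a simple closed curve, $\bar U$ is homeomorphic to the closed disk $\bar D^2$ (Schoenflies), and we may pull everything back along such a homeomorphism, so that $\gamma$ becomes $\partial D^2$ and $i(\gamma, F)$ is unchanged by naturality of degree. The assumption that $F$ is fixed-point-free on $\bar U$ means the displacement $G(x) = F(x) - x$ is nonvanishing on all of $\bar D^2$, hence $x \mapsto G(x)/\|G(x)\|$ is a well-defined continuous map $\bar D^2 \to S^1$.

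The key step is then the standard fact that a map $S^1 \to S^1$ which extends continuously over the disk must have degree zero, because $S^1$ is then null-homotopic inside the disk (contract $\gamma$ to the center) and degree is a homotopy invariant. Concretely, the restriction $G/\|G\|$ to $\partial D^2$ represents the class $i(\gamma, F)$ in $\pi_1(S^1) \cong \Z$, but it factors through the contractible space $\bar D^2$, forcing $i(\gamma, F) = 0$; this contradicts the hypothesis $i(\gamma, F) \neq 0$. Therefore $F$ must have a fixed point in $\bar U$. To get a fixed point in the open region $U$ rather than merely on $\bar U$, I would observe that if all fixed points lay on $\gamma$ itself then the displacement map would vanish somewhere on $\gamma$, contradicting the very definition of $i(\gamma, F)$ (which requires $\gamma$ disjoint from $\fix(F)$); hence the fixed point found lies in the open bounded component $U$.

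The main obstacle, such as it is, is bookkeeping rather than anything deep: one must make sure the Schoenflies reduction is carried out cleanly so that ``degree of the displacement map around $\gamma$'' is genuinely identified with the image of the generator of $\pi_1(S^1)$ under the boundary map, and that the normalization $\|F(\gamma(t)) - \gamma(t)\|$ causes no trouble (it cannot vanish by hypothesis). One should also be mildly careful about orientation conventions so that a nonzero index is what matters, not its sign. Since the statement is explicitly flagged as well-known, I would keep the write-up brief, citing the homotopy-invariance of degree and the Jordan–Schoenflies theorem, and emphasize only the contradiction between ``extends over the disk'' and ``nonzero winding number.''
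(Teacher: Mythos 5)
Your proof is correct, and it rests on the same underlying fact as the paper's argument (homotopy invariance of the winding number of the displacement field), but the implementation is genuinely different. You argue that if $F$ is fixed-point-free on $\bar U$, then the normalized displacement $G/\|G\|$ is defined on all of $\bar U \cong \bar D^2$ (after a Schoenflies change of coordinates), so its boundary restriction is null-homotopic and hence has degree zero — the classic ``extends over the disk'' contradiction. The paper's proof, attributed to Aldo Portela, instead never appeals to an extension over the disk: it uses homotopy invariance of the index in the fixed-point-free region to replace $\gamma$ by a small circle $\alpha = \partial B(x,\epsilon)$ chosen so that $f(B(x,\epsilon))$ is disjoint from $\overline{B(x,\epsilon)}$, and then computes $i(\alpha,F)=0$ directly by hand. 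Your route is the more standard textbook one and needs Schoenflies to identify $\bar U$ with a disk; Portela's trades that for a small explicit local computation, which is why the paper singles it out as particularly clean. Both are fine, and you also correctly handle the minor point that a fixed point cannot lie on $\gamma$ itself since $\gamma$ must avoid $\fix(F)$ for the index to be defined.
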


We include Aldo Portela's
proof here, as it is the easiest,  most elegant that we know of:

\begin{proof}(Aldo Portela).  Let $U$ be the bounded 
component
of the complement of $\gamma$. If $F$ has no fixed points  in $U$, then $i(\alpha, F)=i(\gamma,F)\neq 0$, where $\alpha$ is any simple closed curve contained in $U$.  In particular, 
we can take $\alpha=\partial B(x,\epsilon)$, where $x\in U$ and $\epsilon$ is small enough such that $f(B(x,\epsilon))$ is contained in the complement of the closure of 
$B(x,\epsilon)$.  It is now straightforward to compute $i(\alpha, F)=0$, a contradiction.
\end{proof}

In what follows we relate Lemma \ref{aldo} to the rate problem. We have seen in the previous section that in order to have the rate for $f:A\to A$, we have to find fixed points for any lift $F:\tilde A\to \tilde A$. The universal cover $\tilde A$ can be thought as having four ``ends'': bottom, upper, right and left. We will refer to proper lines connecting the bottom and upper ends of $\tilde A$ as ``vertical'' lines, and to
proper lines connecting the left and right ends of $\tilde A$ as ``horizontal'' lines.

Another simple idea gives the 
rate for maps of the annulus of degree $d$, $|d|>1$: any lift $F$ will push vertical lines that are sufficiently far to the right or left strictly to the right of themselves, or strictly to the left of 
themselves (we are using $|d|>1$ here).  So, with hypothesis guaranteeing that horizontal lines are also pushed strictly above them, or strictly below them, one can calculate the Lefschetz index of a simple closed curve formed by intersecting appropriate pairs of vertical and horizontal lines.  This is formalized as follows:

\begin{lemma}[\!\!\cite{iprx3}] \label{indice}
Let $\alpha$ and $\beta$ be disjoint simple proper lines in the plane, each one of which separates the plane. Let $\gamma$ and $\delta$ be another pair of disjoint curves separating
the plane. Assume also that each $\gamma$ and $\delta$ intersect $\alpha$ in one point and $\beta$ in one point. Now let $\Gamma$ be the simple closed curve determined by the four
arcs of the curves delimited by the intersection points, with the positive orientation.
Now let $f$ be a map defined on $\Gamma$ such that $f(\Gamma\cap\alpha)$ is contained in the component of the complement of $\alpha$ that contains $\beta$,
$f(\Gamma\cap\beta)$ is contained in the component of the complement of $\beta$ that contains
$\alpha$, that $f(\Gamma\cap\delta)$ is contained in the component of the complement of $\gamma$ that does not contain $\delta$ and that
$f(\Gamma\cap\gamma)$ is contained in the component of the complement of $\delta$ that does not contain $\gamma$. Then the index of $f$ in $\Gamma$ is equal to $1$.
\footnote{Many years later I realized that Lemma \ref{indice} can be used to give a proof of Brouwer's theorem for orientation preserving plane homeomorphisms.  This proof
is included in Section \ref{newbrou}.}
\end{lemma}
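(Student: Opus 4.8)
The plan is to compute the Lefschetz index directly, after first normalizing the configuration. By definition $i(\Gamma,f)$ is the degree of the circle map $t\mapsto(f(\Gamma(t))-\Gamma(t))/\|f(\Gamma(t))-\Gamma(t)\|$, that is, the winding number about the origin of the displacement $v(x)=f(x)-x$ as $x$ runs once positively along $\Gamma$. First I would note that the hypotheses make $v$ nowhere zero on $\Gamma$, since a point of $\Gamma\cap\alpha$ lies on $\alpha$ while its image lies in an \emph{open} component of $\R^2\setminus\alpha$, and likewise for the other three arcs; so the index is well defined.

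Next I would straighten everything. A Schoenflies argument sends the proper line $\alpha$ to $\{x=0\}$; then $\beta$ is a proper line lying in one of the two closed half-planes, and a homeomorphism of that half-plane fixing its boundary carries $\beta$ to $\{x=1\}$. Each of $\gamma$ and $\delta$ meets $\{x=0\}$ once and $\{x=1\}$ once, so inside the closed strip $\{0\le x\le1\}$ it consists of a single proper arc joining the two boundary lines; these two arcs are disjoint, so one lies on one side of the other, and a homeomorphism of the strip fixing its boundary (extended by the identity) carries them to $\{y=0\}$ and $\{y=1\}$. Since the index is a topological invariant, I may then assume $\alpha=\{x=0\}$, $\beta=\{x=1\}$, $\gamma=\{y=0\}$, $\delta=\{y=1\}$, and --- this being forced by the combinatorics of the configuration --- that $\Gamma=\partial([0,1]^2)$ with the counter-clockwise orientation, its four arcs being the four sides of the square.

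In this model the four hypotheses say exactly that $v(x)=f(x)-x$ points strictly into the square at every point of $\partial([0,1]^2)$: rightward on the side contained in $\alpha$, leftward on the side in $\beta$, upward on the side in $\gamma$, downward on the side in $\delta$. I would then invoke the standard fact that a nowhere-zero field on $\partial([0,1]^2)$ pointing into the square at every point has winding number $+1$ --- either by tracking a continuous determination of the angle of $v$ side by side (on each side it stays within an open half-plane, at each corner within an open quadrant, which forces the total variation to be $2\pi$), or by homotoping $v$ through non-vanishing fields to the inward radial normal of an inscribed circle, whose winding number is $+1$. This yields $i(\Gamma,f)=1$.

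The main obstacle is the normalization step, and the conclusion really does require it: a hypothesis such as ``$f(x)$ lies on the $\beta$-side of $\alpha$'' constrains the displacement $v(x)$ only once $\alpha$ has been made straight, so one cannot argue from purely local pictures along $\Gamma$. Checking that a single homeomorphism of the plane simultaneously straightens all four curves, with $\Gamma$ landing on the boundary of the square with the correct orientation, together with the bookkeeping of orientations in the invariance of the index, is where the care is needed; the angle-tracking argument, including its behaviour at the corners, is routine.
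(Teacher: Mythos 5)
Your argument is correct in outline and, as far as one can tell from the survey (which states the lemma but defers the proof to the cited paper), it is in the same spirit as what the authors have in mind: reduce to a rectilinear picture and compute the winding number of the displacement field directly.

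Two points deserve a bit more care, both of which you have partially flagged yourself. First, you lean on the claim that the Lefschetz index on a curve is preserved under conjugation by an orientation-preserving homeomorphism $h$ of the plane. This is true, but it is not an immediate consequence of the winding-number formula, since that formula involves the vector difference $f(x)-x$, which is not a topologically natural object. The cleanest justification is that an orientation-preserving homeomorphism of $\R^2$ is isotopic to the identity; along such an isotopy $h_s$, the vectors $h_s(f(\Gamma(t)))-h_s(\Gamma(t))$ vary continuously and never vanish, so the winding number is constant. You should state this (or cite the standard commutativity/invariance axiom of the index) rather than asserting ``the index is a topological invariant'' as a black box, since the whole point of the lemma is to replace a reflex for straight lines by a rigorous statement about curvy ones. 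Second, the phrase ``$v$ points strictly into the square'' overstates what the hypotheses give: on the bottom side you only know $v_y>0$, and $v$ may well point out through the right side. Your subsequent angle-tracking argument (half-plane on each side, quadrant at each corner) uses the correct, weaker constraints and gives the right answer, so this is only a matter of wording; the homotopy-to-the-inward-normal phrasing should likewise be read as ``homotopy through fields satisfying the side-by-side half-plane constraints.'' With these two clarifications the proof is complete; the normalization step (including the Schoenflies bookkeeping and making the straightening homeomorphism orientation-preserving, which can always be arranged by composing with a reflection) is the genuinely laborious part and you have correctly identified it as such.
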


Moreover, a similar statement can be proved when $f(\Gamma\cap\delta)$ is contained in the component of the complement of $\delta$ that does not contain $\gamma$ and
$f(\Gamma\cap\gamma)$ is contained in the component of the complement of $\gamma$ that does not contain
$\delta$; in this case $I_f(\Gamma)= -1$. Therefore, we have the following:

\begin{theorem}[\!\!\cite{iprx3}] \label{t3}
Let $f: A \to A$ be a degree $d$ map of the annulus, where $|d|>1$.  Each one of the following conditions imply that $f$
has the rate.
\begin{enumerate}
\item
Both ends of $A$ are attracting.
\item
Both ends of $A$ are repelling.
\end{enumerate}
\end{theorem}

\begin{theorem}[\!\!\cite{iprx3}]\label{sarkoski} Let $F$ be a map of the open annulus $A$ that interchanges the ends of $A$. If $|\deg(F)|>1$, then $F$ has the rate.
\end{theorem}

Note that this last one can be seen as a forcing, Sarkovskii-type, result: let $f:S^2\to S^2$ be a degree $d$ map, where $|d|>1$.  If $f$ has a $2$-periodic totally invariant cycle, 
then it has the rate.

\section{Thurston maps with parabolic orbifolds}\label{para}

We saw in the previous section that if a  branched covering $f:S^2\to S^2$ has exactly two critical points $\{p,q\}$ verifying $f^{-1}(\{p,q\})=\{p,q\}$,
then $f$ restricts to a self-covering of the open annulus.  For a sphere branched covering, this specific critical configuration is very restrictive, but as we have seen, quite
interesting.

In this section we dive further in this direction and focus on the structure of the postcritical set: are there hypotheses on this set that guarantee the desired growth rate? We 
showed in \cite{thurston} that
for Thurston maps with parabolic orbifolds either the growth rate is satisfied, or the structure of the postcritical set is exactly as in the counterexample 
$(r,\theta)\mapsto (2r, 2\theta)$:
there are exactly two critical points which are fixed and totally invariant.  We will define Thurston maps and parabolic orbifolds soon, but we first give a little context. 

Thurston maps with parabolic (also called non-hyperbolic) orbifolds were completely classified in \cite{dh}. It turns out that these maps can be lifted to degree $d$ covering maps
$(|d|>1)$ of either the open annulus or the torus, or even act directly on the open annulus by restriction (for example, when
there are exactly two totally invariant critical points as in the map $(r,\theta)\mapsto (2r, 2\theta)$). So topologically, parabolic orbifolds are quotients of torus or open annulus 
endomorphisms\footnote{The converse statement is also true: quotients of torus and annulus endomorphisms have parabolic orbifolds. This was proved by the author and S. Llavayol in 
\cite{qote}.}, where the quotient may be trivial. 

The theory of torus endomorphisms is of course well understood,
even from the point of view of the growth rate inequality, because of compactness and homological considerations.  The theory of open annulus endomorphisms was discussed in 
Section \ref{ac} of the present work. As a consequence, these very special maps (Thurston maps with parabolic orbifolds) provide a perfect environment to tackle 
this elusive growth rate problem.

The growth rate inequality for \textit{expanding} Thurston maps was already known.  It was shown in \cite{Li} that each \textit{expanding} Thurston map has $\deg f +1$ fixed points, counted
with appropriate weight.  The definition of an expanding Thurston map is rather technical and we will not discuss it here, but we refer the interested reader to the excellent 
book \cite{bm}.

We do not assume our maps
to be expanding, but
more importantly, even when they are expanding, our proofs are entirely different.  We gave purely topological proofs of the results, using only elementary algebraic topology and 
the Lefschetz fixed point theorem.

\subsubsection*{Definitions and notations}  If $f:S^2\to S^2$ is a branched covering, we denote by $\deg_xf$ the local 
degree of $f$ at $x$.  We will call $S_f=\{x\mid \deg_x f>1\}$ the critical set of $f$, and $P_f=\cup_{n>0}f^n(S_f)$ the postcritical set.

A \textit{Thurston map} is an orientation preserving branched covering of the 
sphere onto itself such that the postcritical set $P_f$ is finite. The \textit{ramification function} $\nu_f$  of a Thurston map $f$ is 
the smallest among functions $\nu:S^2\to \N^*\cup\{\infty\}$ such that:
\begin{itemize}
 \item $\nu(x)=1$ if $x\notin P_f$;
 \item $\nu(x)$ is a multiple of $\nu(y)\deg_y(f)$ for each $y\in f^{-1}(x)$.
\end{itemize}
                                                                        
The \textit{orbifold} associated to a Thurston map $f$ is the pair $\mathcal{O}_f=(S^2, \nu_f)$. Note that $\{p\in S^2:\nu_f(p) \geq 2\}=P_f$ is a finite set.  If we label these points
$p_1,\dots,p_n$ such that $2\leq\nu_f(p_1)\leq\ldots \leq \nu_f(p_n)$, then the $n$-tuple $(\nu_f(p_1), \ldots, \nu_f(p_n))$ is called the \textit{signature} of $(S^2, \nu_f)$.
Thus the numbers appearing in the $n$-tuples are the 
values of the ramification
function, which assign ``weights'' to points in $P_f$.

The \textit{Euler characteristic} of $(S^2, \nu_f)$ is 
\[\chi (\mathcal O_f)=2-\sum_{x\in P_f} \left(1-\frac{1}{\nu_f(x)}\right).\] 

A Thurston map has \textit{parabolic} orbifold if $\chi (\mathcal O_f)=0$. A Thurston map has a parabolic orbifold if and only if $\deg_p f \cdot \nu_f(p) = \nu_f(f(p))$ for all $p\in S^2$
and if and only if the signature of $\Of$ is   $(\infty, \infty)$, $(2,2,\infty)$, $(2,4,4)$, $(2,3,6)$, $(3,3,3)$ or
$(2,2,2,2)$ (see, for example, \cite[Proposition 2.14]{bm}).  If there is no place for confusion,  we will often write $\nu$ instead of $\nu_f$ for the ramification function.

\begin{theorem}[\!\!\cite{thurston}]\label{thurstonthm} Let $f: S^2 \to S^2$ be a Thurston map with parabolic orbifold and degree $d$, $|d|>1$. Then either the growth rate inequality 
$\displaystyle \limsup_{n\to\infty} \frac{1}{n} \log (\# \fix (f^n))\geq \log |d|$ holds for $f$ or 
$f$ has exactly two critical points which are fixed and totally invariant.
\end{theorem}

It follows that a Thurston map with parabolic orbifold either satisfies the growth rate inequality or  the signature of $f$ is $(\infty, \infty)$ and
$P_f=S_f=\{p,q\}\subset \fix(f)$.

The proof of this theorem is done case by case, analyzing the six possible signatures for parabolic orbifolds.  
Regarding the signature $(\infty, \infty)$, we point out the following: if a map
in this class does not  satisfy the growth rate, then necessarily $p$ and $q$ are fixed (and not a period-2 critical cycle). 
This is stated in \cite[Remark 4(2)]{iprx3} and was explained in Theorem \ref{sarkoski} in the previous section.

\begin{proof}({\em Sketch in the general case:})
The key is to show that each parabolic map
\[
f:S^2\to S^2
\]
lifts to a map
\[
F:X\to X
\]
under a branched covering projection
\[
\pi:X\to S^2,
\]
where $X$ is either the annulus or the torus, and $\pi$ is finite-fold.

Therefore, if a lift $F:X\to X$ has the rate, then $f$ also has the rate. Indeed, for every iterate $k$, the fixed points of $f^k$ are at least a fixed fraction of those of $F^k$.

In order to prove the existence of the lifts, we use the classical lifting criterion: parabolicity provides exactly the homotopical condition required for lifting.

In order to prove that a lift $F:X\to X$ has the rate one uses the deck transformations of the covering projection $\pi$.  If one lift $F$ does not have the rate, then there exists
$T \in \Deck(\pi)$ such that $TF$ has the rate.  In the annulus case, which reduces to the $(2,2,\infty)$ signature,  one uses again Theorem \ref{sarkoski}, as if one lift $F$ fixes the end of
the annulus, then the other one interchanges them ($\pi$ is the usual two-fold branched covering from the annulus to the plane in this case). For each of the remaining (toral)
signatures, one just needs to check that the lift $F:\T^2\to \T^2$ has hyperbolic action on homology in order to verify the rate. Indeed, it follows from \cite[Theorem 1.2, p.~618]{bfgj} that if all eigenvalues of $F_*:H_1(\T,\Z)\to H_1(\T,\Z)$ are different from $1$, then
$F$ satisfies the growth rate inequality.  Again, if one lift $F:\T^2\to \T^2$ has non-hyperbolic homology action, one can find $T \in \Deck(\pi)$ such that $TF$ acts hyperbolically
on homology.  This is due to the very specific geometries of the branched coverings $\pi$ arising from parabolic orbifolds.  For much more on this last point we refer the reader 
again to \cite{bm}.
\end{proof}

To finish this section, we point out the following. We have seen that parabolic orbifolds come with a surface covering associated to them, and therefore with a plane homeomorphism
associated to them.  As already explained, pairing this together with Brouwer and Nielsen theory guarantees the growth rate inequality in this setting, apart from a well
known exception.  This means that for parabolic orbifolds, the obstructions for not verifying the growth rate inequality are completely understood: $f$ has exactly two critical points 
which are fixed and totally invariant. It would be then desirable to have a program to link the theory of plane homeomorphisms with hyperbolic orbifolds as well.

\section{Topological polynomials}\label{polis}
In this section, we still consider sphere branched coverings, but drop the parabolicity condition and  make the following assumption instead: there exists a proper, open, simply-connected and completely invariant region 
$R\subset S^2$.
These two assumptions (branched covering + completely invariant simply connected region) are strong assumptions,
but there will be more, since many examples of maps not having the rate satisfy these two assumptions (we discuss many of these examples in Section \ref{expls}). These examples also
satisfy that there are exactly two fixed critical points of
multiplicity $d-1$, one in $R$ and the other one in the boundary of $R$.   It follows that $f$ can be thought of as a covering map of the open annulus $\R ^2 \backslash \{0\}$, and such
maps were discussed in Section \ref{ac}.

In the case that the boundary 
of $R$ is locally connected, we showed that outside this very particular post-critical configuration, the growth rate inequality is verified:

\begin{theorem}[\!\!\cite{bcs}]\label{teoa} Let $f$ be a degree $d$ branched covering of the sphere, where $|d|>1$. Assume that there exists a completely invariant simply connected region $R$ whose boundary is locally
connected. Assume moreover that it is not the case that there exists only one critical point in the boundary of $R$ that has multiplicity $d-1$ and is fixed by $f$. Then, $f$ has
the rate.
\end{theorem}

We do not know if the hypothesis on local connectivity is necessary.
A main ingredient in the proof of Theorem \ref{teoa} is that  $f$ extends continuously to the prime end closure of $R$. This extension of $f$ induces a map $\tilde f$ of the circle that turns 
out to be a 
degree $d$ covering, despite the fact that $f$ could have critical points on the boundary of the region $R$. It follows that for every positive $n$, $\tilde f^n$ has at least
$|d^n-1|$ fixed points. As the boundary of $R$ is locally connected, to each periodic point of $\tilde f$ corresponds a periodic point of $f$ in the boundary of $R$. However, this 
correspondence is not injective, so in order to get the rate one has to understand how many different $\tilde f$-periodic prime ends correspond to the same point in the boundary of $R$.

An example to have in mind is when $f$ is a complex polynomial with connected and locally connected Julia set. Then $f$ has a superattracting fixed point at infinity and the region $R$ is
its basin of attraction, which is the complement of the filled Julia set. Figure \ref{rayos} shows different periodic rays landing at the same point, namely the periodic orbit
$1/7 \to 2/7 \to 4/7$ is reduced to a point in the boundary of $R$.  More figures illustrating this
phenomenon can be found in \cite[Chapter 18]{milnor}.
\begin{figure}[ht]
    \centering
    \psfrag{17}{$\frac{1}{7}$}
    \psfrag{27}{$\frac{2}{7}$}
    \psfrag{47}{$\frac{4}{7}$}
    \includegraphics[scale=0.3]{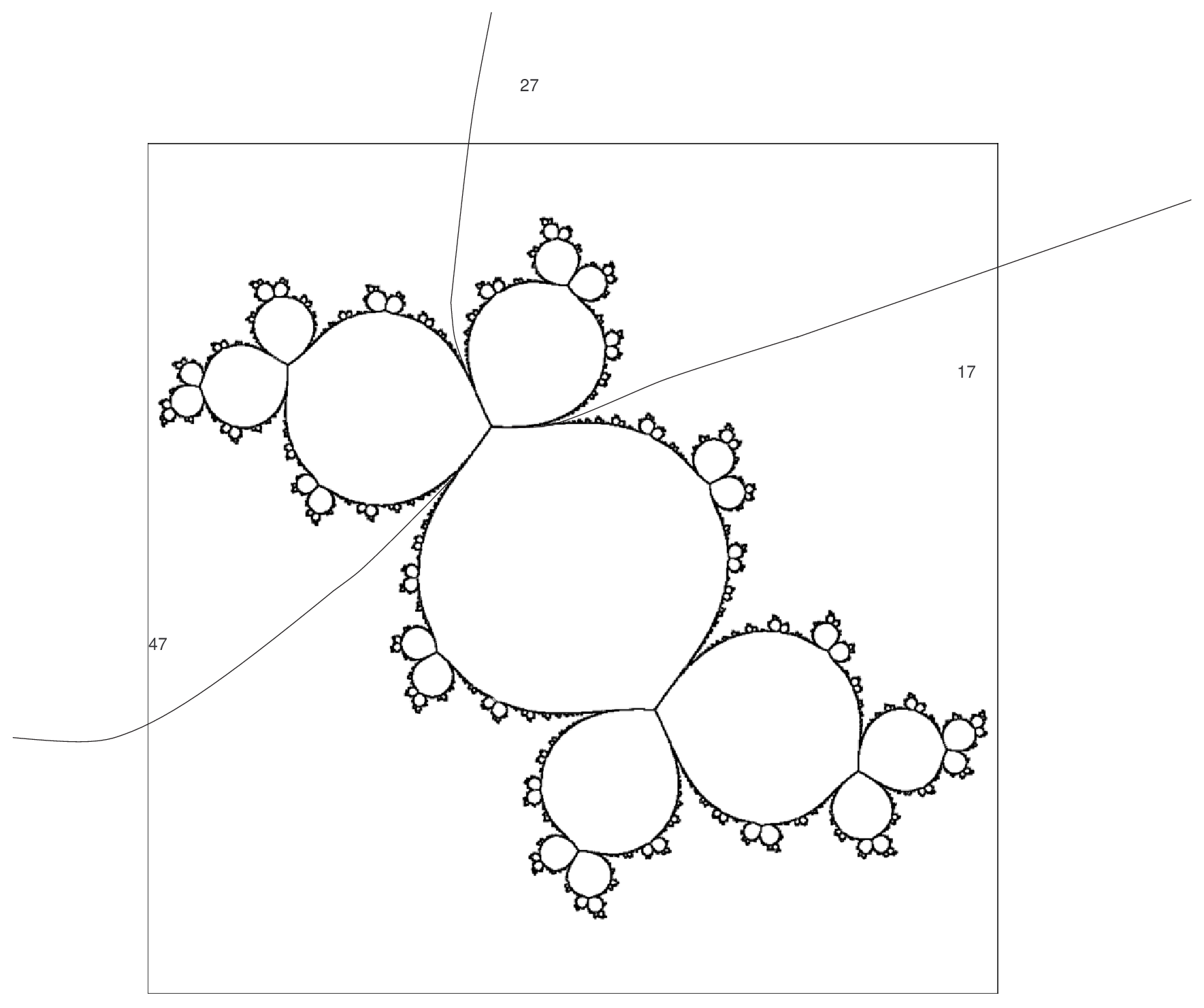}
    \caption{Julia set for $f(z) = z^2 - 0.110 + 0.6557i$}
    \label{rayos}
\end{figure}

So, in particular,
Theorem \ref{teoa} is a topological version of the fact that complex polynomials (with connected and locally connected Julia set) have the rate.  Following the polynomial analogy, the opposite 
situation
corresponds to the case when all critical points belong to the complement of the filled Julia set. We also gave a topological version of the fact
that such maps have the rate:

\begin{theorem}[\!\!\cite{bcs}]\label{teob} Let $f$ be a degree $d$ branched covering of the sphere, where $|d|>1$. Assume that there exists  a simply connected open set $U$ whose closure is disjoint from the set of
critical
values and
such that $\overline{f^{-1} (U)}\subset U$. Then $f$ has the rate.
\end{theorem}

Going back to Theorem \ref{teoa}, we already pointed out that local connectivity is a technical hypothesis used only to guarantee that external rays land.  If one wanted to push the proof 
forward, one has to consider the case where external rays land on proper subcontinua of $\partial R$ and try to fish for fixed points inside these continua. This strategy could work
if for instance one could prove that the impression of periodic rays (or at least of enough of them) is contained in an injectivity region of $f$.  Indeed, if that were the case, 
one could get fixed points of the iterates $f^n$ by applying some Brouwer theory locally.  However, it may as well
happen that the impression of every (periodic) prime end is the whole set $\partial R$, in which case this particular strategy falls apart.  The question that arises then is 
whether or not
that is possible.

It is fascinating that this is not known even in the holomorphic case.  Indeed, whether or not there exists a rational function with an
indecomposable continuum as its Julia set is a well-known unsolved
problem (see \cite{cmr}).  Furthermore, it was not known until recently if there existed \textit{branched coverings} of the sphere of degree $d$, with $|d|>1$, having an indecomposable 
completely invariant continuum. In \cite{plig} such an example was given, and we will discuss it in the next section. However, this example does not have a completely invariant
proper and simply-connected region.  Moreover, it is not known whether or not there exists a \textit{plane}
branched covering of degree $d$, with $|d|>1$, 
having an indecomposable 
completely invariant continuum.

In \cite{elena} the results in \cite{cmr} were studied from a topological viewpoint, and were shown to be purely topological (independent of the 
holomorphic assumptions).

It is known that \textit{certain} indecomposable continua cannot be totally invariant subsets of a sphere branched covering, and this will be discussed in
Section \ref{indec}.  However, intriguingly enough, the general question  stands wide open.

\section{Indecomposable continua}\label{indec}

As explained in the previous section, the hypothesis of local connectivity does not seem to be relevant in Theorem \ref{teoa}
and it is plausible that this hypothesis should be replaced by decomposability instead.  It is then natural to try to understand the examples presenting totally invariant
indecomposable continua.  This turns out to be a rather intricate endeavour.

We first point out that it is well known that surface homeomorphisms admit indecomposable continua as invariant subsets.  There are several methods to construct these
examples, and they can even be made minimal; in particular, periodic-point-free (see, for example \cite{handel} and \cite{bama}).  
It is also well-known that the solenoid admits non-trivial connected coverings (see \cite{sole}), but the solenoid is not embeddable in the sphere. It is also known that for any integer $k$, the
pseudo-circle is a $k$-fold
covering space of itself (see \cite{gam} and \cite{heath}). However, the following remains unknown: does there exist a plane branched covering with a totally invariant 
pseudo-circle? That is, it is not known how to extend a self-covering of the pseudo-circle to the whole plane so that the extension is a branched covering.  It is known
how to extend it to a \textit{map} of the annulus, which was already noted in \cite{boronski}.  Moreover, in \cite{boronski} the rate was proven to hold 
for annulus maps preserving a pseudo-circle and this result was generalized to all plane separating circle-like continua.

\subsection{A sphere branched covering supporting a completely invariant indecomposable continuum}  We proved the following:

\begin{theorem}[\!\!\cite{plig}]
\label{t1}
There exists a branched covering $f:S^2\to S^2$ supporting a completely invariant continuum $K$ and satisfying the following
properties:

\begin{enumerate}
\item
$f$ is smooth.
\item
$f$ has two attracting fixed points, each basin contains one of the two critical points of $f$. The restriction of $f$ to the immediate basins is injective.
\item
The compact set $K$ is the complementary set of the union of the basins: $K$ is a repellor with local product structure.
\item
$K$ is indecomposable, with infinitely many Wada lakes.
\item
$f$ satisfies the growth rate inequality.
\item
Every $C^1$ perturbation of $f$ satisfies properties $2$, $4$, and $5$ above. Every branched covering which is also a small $C^\infty$ perturbation of $f$ satisfies all 
the properties above.
\end{enumerate}
\end{theorem}

In retrospect, the idea for the construction is very simple.  We begin with a nonexpanding Anosov endomorphism of the torus with two fixed points and perform the derived from 
Anosov perturbation 
to transform both hyperbolic fixed points into sinks.  The derived from Anosov endomorphism was first introduced by F. Przytycki, see \cite[Section~6]{prz}.  In his paper,
he offered the following remark, which I quote verbatim here due to its clear relevance:

``(Caution: One usually makes a source in
the
case of a diffeomorphism, but this way would be wrong in the case
of an endomorphism).'' 
Now, one considers the set $K$ defined as the complement of the basin of attraction of the sinks we created.  In this way, we get that $K$ is a totally invariant repeller and an indecomposable continuum.  To finish our construction we just factorize to the sphere under a four points ramification branched covering, where the ramification points of the covering projection are fitted exactly to the sinks we created. Then some 
arrangements are made to obtain smoothness and the remaining properties.

There is another possible approach: the linear Anosov can be first carried to the sphere under the branched covering and then one can imitate the construction of 
the Plykin
attractor (that in our case would be a repeller).  In this case,  a derived from pseudo-Anosov perturbation must be made to obtain the  example (see \cite{bam} for the derived from pseudo-Anosov perturbation).

We explain the details of our construction in what follows. We consider the linear hyperbolic toral endomorphism $\overline A$ induced by the matrix 

$$A=
  \left[ {\begin{array}{cc}
   4 & 1 \\
   2 & 1 \\
  \end{array} } \right]
$$

The map $\overline A: \T^2\to \T^2$  is a nonexpanding Anosov endomorphism of degree $2$; its dynamics is well known, it has invariant stable and unstable foliations with dense leaves.
It is {\em special} meaning that the unstable space of a point does not depend on the preorbit chosen to define it; it depends only on the point. This is well known not to be structurally stable: generic $C^1$ perturbations do not satisfy this property (see \cite{prz}).

Consider the equivalence relation $x\sim -x$ on
\[
\T^2=\R^2/\Z^2.
\]
Note that $\T^2/\sim$ is topologically a sphere, and that the quotient map
\[
\pi:\T^2\to S^2
\]
is a degree $2$ branched covering with exactly four critical points. These correspond to the classes of the points
\[
\left(\tfrac12,\tfrac12\right),\ (1,0),\ \left(\tfrac12,0\right),\ \left(0,\tfrac12\right).
\]

Note also that $\overline A$ induces a map on the sphere simply because it is linear, but we will previously proceed to perform a derived from an Anosov endomorphism
(as introduced in \cite{prz}). 
The fixed points of $\overline A$ are $(0,0)$ and $(1/2,1/2)$. Both can be turned into attractors, creating as usual two new saddles. This procedure is well known and taking extra care to note that the
modification can be made symmetric with respect to the strong stable manifold of the fixed points. It then follows that the new map $A'$ still preserves the equivalence relation
$\sim$ in $\T^2$. As is also well known, the unstable foliation remains the same.  It follows that $A'$ induces a map on the sphere $f'$ so that $\pi A'=f'\pi$.

By construction $f'$ is a degree $2$ branched covering map. It has two critical points that can be easily found by computing those points having just one preimage. The critical points are $\pi(1/4,0)$ and $\pi(1/4,1/2)$ with corresponding critical values $\pi(0,1/2)$ and $ \pi(1/2,0)$.
Moreover, the images of the critical values are the fixed points of $f'$: $f'(\pi(0,1/2)) = \pi(1/2,1/2)$
and $f'(\pi(1/2,0)) = \pi(0,0)$.

It follows by construction that $f'$ is a $(2,2,2,2)$ orbifold (see Section \ref{para} for the definition).

Note that as $A$ is smooth, $f'$ is also smooth except exactly at the critical points and critical values where it is not even differentiable. These points belong to the 
basin of attraction of the fixed points. We show next how to make a critical point $c$ smooth, which in our linear case  can be  easily done. Note that the preimage of an adequate ellipse $E$
around $f'(c)$ is a circle $S_\rho(c)$ of radius $\rho$ centered at $c$ such that $f'$ is $2:1$  from $S_\rho(c)$ to $E$.
Moreover, lines through $c$ are carried to lines through $f(c)$, which implies that, in polar coordinates (centered at $c$ and $f'(c)$),
one has $f'(r,\theta)=(g(r,\theta),h(\theta))$.
In addition, note that by linearity again, $\frac{g(r,\theta)}{r}$ is bounded and bounded away from $0$.
Let now $\Phi$ be a $C^\infty$ function that is equal to the identity close to $S_\rho(c)$ and is equal to $(r,\theta)\to (e^{-1/r^2},\theta)$ close to $0$.  Then $f'\circ\Phi$ is
$C^\infty$ at $c$ and coincides with $f'$ outside a neighborhood of $c$.

Finally we can make a new modification of this map so that the obtained map is also smooth at the critical values (much easier because $f'$ is locally a homeomorphism at these points).
We denote by $f$ this last map. 

Note that $f$ has the rate; this is obvious from the construction, but it also follows from Theorem \ref{thurstonthm}.  We refer the reader to \cite{plig}  to see that $f$ satisfies all the remaining requirements of
Theorem \ref{t1}.

So, we have an example of a sphere branched covering supporting a completely invariant indecomposable continuum, but the example is uninteresting from the point of view of
the growth rate inequality.

Moreover --- and unfortunately --- this map is not Thurston equivalent to a rational map.\footnote{We will not define Thurston equivalence here, but we refer the reader to \cite[Section 2.4]{bm}} It is a parabolic $(2,2,2,2)$ orbifold with associated 
torus map in the homotopy class of Anosov, which is a well-known Thurston obstruction (see Section \ref{para} for the definitions and further references). So, even in the $C^0$ 
setting it is unknown if a map supports an indecomposable ``Julia set''.  More precisely, whether or not there exists a Thurston map equivalent to a rational map having an indecomposable completely invariant continuum remains unknown. And adding to the shortcomings of our example, it is not a \textit{topological polynomial}, that is, it
is not a \textit{plane} branched covering.  Plane branched coverings were treated in Section \ref{polis}, and were our main motivation to dive into dynamics of indecomposable continua.

We remark that it remains unknown whether or not there exists a \textit{plane} branched covering of 
degree $d$,
with $|d|>1$, 
having a  
completely invariant indecomposable continuum.

\subsection{The Riemann-Hurwitz formula}

At this point, the reader should be convinced that understanding the dynamics of coverings and branched coverings on indecomposable continua is essential for deepening our understanding
of the
dynamics of branched coverings $f:S^2\to S^2$.

Branched coverings of the 
pseudo-arc were first constructed in \cite{vernon}, and constructed again and used in \cite{jernej} to prove the Barge Entropy Conjecture. These are degree $d$ branched coverings with one critical point of multiplicity $d-1$. In \cite{covth} we provided
 an example of a branched covering of the Knaster continuum with a different ramification portrait, whose peculiarity is that it does not satisfy the classical Riemann-Hurwitz formula.

Recall that if $f:T\to S$ is a branched covering between compact surfaces, then the Riemann-Hurwitz formula
\[
\chi(T) = d \chi(S) - r
\] 
holds, where $\chi$ is the Euler characteristic, $d$ is the degree of $f$, and $r$ is the 
number of critical points counted with multiplicity.

 \begin{theorem}[\!\!\cite{covth}]\label{ext}  Let $f:S^2\to S^2$ be a branched covering of degree $d\geq 1$. Let $X\subset S^2$ be a non-separating continuum such that $f^{-1}(X)=X$.  Then $X$ satisfies the
 Riemann-Hurwitz formula $1=d-r$, where $r$ is the number of critical points in $X$ counted with multiplicity.
 \end{theorem}
 
 \begin{proof}  Let $\gamma$ be a simple closed curve separating $X$ and all critical values of $f$ outside $X$. Let $U$  be the connected component of $S^2\backslash \gamma$
 containing $X$.  Note that there is only one connected component of the complement of $f^{-1}(\gamma)$ containing $X$, and therefore $f^{-1}(U)$ is connected and equals this component $V$.  Now, the 
 Riemann-Hurwitz formula $1=d-r'$ holds for $f|_{V}:V\to U$, where $r'$ is the number of critical points of $f$ in $V$ counted with multiplicity.  By the choice of $\gamma$, the 
 critical points of $f$ in $V$ equal the critical points of $f$ in $X$, and the result follows.
 \end{proof}

 \begin{remark}  The previous theorem allows us to prove that certain branched coverings acting on planar continua do not extend to branched coverings of the plane or the sphere just by
 looking at the intrinsic dynamics on the continua.
 \end{remark}

 \begin{corollary}  Let $f:X\to X$ be a branched covering of degree $d$, where $X$ is a non-separating planar continuum. If $f$ extends to a degree $d$ branched covering of the sphere, then
 $f:X\to X$ satisfies the
 Riemann-Hurwitz formula $1=d-r$, where $r$ is the number of critical points in $X$ counted with multiplicity.
 \end{corollary}

One could expect that this formula would hold for branched coverings defined on arbitrary continua, where one would need to accommodate every definition (note that 
even the definition of a covering map is shady when the underlying space is not locally connected).  However, as will be explained below, even in simple cases where the definitions
adapt trivially, the formula may fail.

We  proved in  \cite{covth}  that \textit{any} branched
self-covering
of the Knaster continuum does not verify the Riemann-Hurwitz formula, and therefore cannot be extended to a branched covering of the sphere.  This result shows that 
studying
intrinsic dynamics of coverings and branched coverings on indecomposable continua is a useful tool, even if our goal is understanding their interaction with an ambient space dynamics.
It also rules out the possibility of the Knaster continuum as a totally invariant set for a branched covering of the sphere. We expect this kind of result to generalize for a 
wider class of indecomposable continua.

\begin{lemma}[\!\!\cite{covth}] \label{bck} Let $K$ be the Knaster continuum and $f:K\to K$ be a degree $d$ branched covering, $d\geq 1$.  Then $f$ has exactly one critical value $p$: the endpoint of $K$. Moreover, $p$ is fixed and 
regular, 
and any other preimage of $p$ is a critical point of multiplicity $1$.  In particular, $d$ is odd and  there are exactly $\frac{d-1}{2}$ critical points.
 \end{lemma}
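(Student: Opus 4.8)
The plan is to proceed in three stages: first pin down the endpoint dynamically, then analyse the local behaviour of $f$ around the fibre $f^{-1}(p)$, and finally count. Throughout I would use two classical features of the Knaster continuum $K$ (realized, say, as the inverse limit of the full tent map, or as the nested--semicircle buckethandle): every proper nondegenerate subcontinuum of $K$ is an arc, and $K$ has exactly one endpoint, namely the unique point $p$ such that the family of subcontinua of $K$ containing $p$ is linearly ordered by inclusion (intuitively, $p$ is a non--cut point of every subcontinuum containing it).

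\emph{Step 1 ($f(p)=p$).} A branched covering is in particular an open, hence confluent, map of the compactum $K$: every component of the preimage of a subcontinuum is mapped onto that subcontinuum. Since proper nondegenerate subcontinua of $K$ are arcs, the image under $f$ of such a subcontinuum is a continuous image of $[0,1]$, hence a Peano continuum by the Hahn--Mazurkiewicz theorem; as $K$ is not locally connected, this image cannot be all of $K$, so $f$ carries proper subcontinua to proper subcontinua. Now, given subcontinua $A,B$ containing $f(p)$, let $A'$ and $B'$ be the components of $f^{-1}(A)$ and $f^{-1}(B)$ containing $p$. By confluence $f(A')=A$ and $f(B')=B$; by the endpoint property $A'$ and $B'$ (both containing $p$) are comparable, say $A'\subseteq B'$, whence $A=f(A')\subseteq f(B')=B$. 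Thus the subcontinua containing $f(p)$ are linearly ordered, i.e. $f(p)$ is also an endpoint of $K$, so $f(p)=p$ by uniqueness.

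\emph{Steps 2--3 (the fibre of $p$, and the count).} This is the crux, and it uses the local structure of $K$. At every point $x\neq p$ a neighbourhood of $x$ in $K$ is homeomorphic to $\mathcal{C}\times(-\epsilon,\epsilon)$ with $\mathcal{C}$ a Cantor set and $x$ corresponding to $(c_0,0)$, while at $p$ the ``arc direction'' is one--sided: the unique arc--germ at $p$ is $[0,\epsilon)$ rather than $(-\epsilon,\epsilon)$ (this is exactly the content of $p$ being an endpoint). One should verify that a branched covering of $K$ respects such charts, acting locally as a homeomorphism in the Cantor factor together with a branched covering of intervals in the arc factor; the analysis then reduces to one dimension. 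If $z$ is a critical point ($\deg(f,z)\geq 2$), then in the arc direction $f$ must fold the interval--germ through $z$, and a fold of an open interval has one--sided image, so its target must be the unique point with one--sided arc--germ, namely $p$; hence the only critical value is $p$. For the same reason $p$ itself cannot be a fold point ($[0,\epsilon)\setminus\{p\}$ is a single interval, with no nontrivial branched self--cover), so $\deg(f,p)=1$ and $p$ is regular. Conversely, if $x\in f^{-1}(p)$ with $x\neq p$, then $f$ maps the two--sided germ $(-\epsilon,\epsilon)$ of $x$ onto the one--sided germ $[0,\epsilon')$ of $p$, surjectively onto a neighbourhood; deleting $x$ splits the source into two open intervals, each of which must cover $[0,\epsilon')\setminus\{p\}$ homeomorphically, so $\deg(f,x)=2$ exactly --- $x$ is critical of multiplicity $1$, and these points are precisely $\mathrm{Crit}(f)$. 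Finally, the fibre--degree identity $\sum_{x\in f^{-1}(p)}\deg(f,x)=d$ together with $\deg(f,p)=1$ forces $d=1+2m$, where $m=\#(f^{-1}(p)\setminus\{p\})=\#\mathrm{Crit}(f)$; hence $d$ is odd and $\mathrm{Crit}(f)$ has exactly $\tfrac{d-1}{2}$ points.

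The main obstacle is the local analysis in Steps 2--3: one must extract from the inverse--limit (or semicircle) description of $K$ a precise picture of neighbourhoods at $p$ versus elsewhere, check that a branched covering in the sense of \cite{covth} really does split locally into a Cantor--direction homeomorphism times an interval branched covering, and establish the fibre--degree identity in this non--locally--connected setting (where even the notions of ``chart'' and ``covering map'' need care). It is exactly here that the usual surface / Riemann--Hurwitz intuition fails --- unsurprisingly, since the purpose of this part of the paper is precisely to exhibit continua violating Riemann--Hurwitz, in contrast with Theorem \ref{ext}.
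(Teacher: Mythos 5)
Your argument is correct and, at its core, exploits the same key observation as the paper's proof: the endpoint $p$ is the unique point of $K$ with a one-sided arc-germ, so any fold must send its fold-value there, forcing $f(p)=p$, $\deg(f,p)=1$, and $\deg(f,x)=2$ for each $x\in f^{-1}(p)\setminus\{p\}$, after which the fibre count $\sum_{x\in f^{-1}(p)}\deg(f,x)=d$ gives $d$ odd with $\tfrac{d-1}{2}$ critical points. Where you diverge is in the supporting machinery. Your Step 1 establishes $f(p)=p$ by a genuinely continuum-theoretic route (openness implies confluence, Hahn--Mazurkiewicz rules out an arc mapping onto the non-locally-connected $K$, and the linear-order characterization of the endpoint is preserved under confluent images); the paper instead reads $f(p)=p$ directly off the local-structure dichotomy $C\times(0,1)$ versus $C\times[0,1)$, since a branched covering carrying a one-sided germ onto a two-sided one would give a connected cover of a disconnected punctured neighborhood. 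Your Steps 2--3 reduce to one-dimensional dynamics via the local product charts, and you honestly flag that proving a branched covering splits locally into a Cantor-direction homeomorphism times an interval branched covering is the real crux. The paper sidesteps exactly that issue by working with composants rather than charts: a branched covering must carry composants into composants (they are the arc-components of $K$), so the restriction of $f$ to a composant is already a map of half-lines, and critical points of $f$ become critical points of an interval map with no chart-splitting to verify. So the two proofs agree on the mechanism but differ in how they establish (i) that $f(p)=p$ and (ii) that the critical-point analysis is genuinely one-dimensional; the composant argument is the cleaner way to close the gap you identified, and is worth adopting.
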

 
 \begin{proof}  Note that any point other than $p$ in $K$ is locally homeomorphic to $C\times I$, where $C$ is the Cantor set and $I=(0,1)$.  This implies that $p$ must be fixed and 
 regular.  Any other preimage of $p$ is a point with local structure $C\times I$ which is mapped to the endpoint of $K$.  Then, necessarily the point is critical and its multiplicity
 is $1$ (as composants are sent to composants, the critical points are critical points of interval maps).
 \end{proof}
 
 \begin{corollary}  Let $f:K\to K$ be a degree $d$ branched covering, $d\geq 1$.  Then $f$ does not satisfy the Riemann-Hurwitz formula $d-1=r$.
 \end{corollary}

Because of Theorem \ref{ext}, this allows us to rule out the possibility of the Knaster continuum as a ``Julia set'' for a  branched covering of the sphere.

We finish this section with a question.  We have seen that indecomposability is an obstruction to proving the classical Riemann-Hurwitz formula for arbitrary (non-separating) continua.
Is it the only one?

\section{Examples}\label{expls}

This section contains a potpourri of examples that are relevant to everything discussed in this paper.

\subsection{Recurrence and periodic orbits}\label{e4}

As in the fixed-point-free degree-$2$ covering example $(r,\theta)\mapsto
(2r,2\theta)$, every point is wandering, so one may ask if the existence of a
non-wandering point is enough to assure the existence of a fixed point. The
next example shows that this is not the case.

We will construct a degree $2$ covering $f:(0,+\infty)\times S^{1}\to(0,+\infty)\times S^{1}$ such that there is a compact set $K$ satisfying $f(K)= K$ and $\per(f) = \emptyset$. 
In this example $K$ is a Cantor set (recall that if $K$ is an essential continuum, then $f$ has the rate by Theorem~\ref{ka}).

We first explain how to construct a degree $2$ circle covering having a wandering interval (we call this map a ``degree $2$ Denjoy'' and this construction was communicated to us
by Edson de Faria).

Let $g_1: S^{1}\to S^{1}$ be a Denjoy homeomorphism with a wandering interval $I$. Take an open interval  $I_0\subsetneq I$ and an increasing function 
$h:I\to S^{1}$ such that $h(I_0)=S^1$ and $h|_{I\setminus I_{0}}\equiv g_1$ (see Figure \ref{figura1} (a)). Let $g: S^{1}\to S^{1}$ be the map
\begin{equation*}
g(x)= \left\{
\begin{array}{l}
g_{1}(x) \mbox{ if $x\notin I$} \\
h(x) \ \mbox{ if $x\in I$}\\ 
\end{array}
\right.\\
\end{equation*}

Note that $g$ is a degree $2$ covering of the circle and $g_1(I)$ is a wandering interval for $g$. Besides, if $x_0\in  g_1(I)$ then $K_1 = \omega _g(x_0)$ is a Cantor set and
$K_1\cap \per(g) = \emptyset$.

We are now ready to construct our example  $f:(0,+\infty)\times S^{1}\to(0,+\infty)\times S^{1}$, which has the form $f(r,\theta )=(\phi (r,\theta),g(\theta ))$, where $\phi$ is to be
constructed. Let $\varphi:(0,+\infty )\to (0,+\infty)$  be as in Figure \ref{figura1} (b).  Define  $\phi  (r,\theta)=\varphi(r)+r d (\theta,K_1)$.

\begin{figure}[ht]
    \centering
    \psfrag{0}{$0$}
    \psfrag{y=x}{$y=x$}
    \psfrag{1}{$1$}
    \psfrag{g}{$g$}
    \psfrag{f}{$\varphi$}
    \psfrag{11}{$\frac{1}{2}$}
    \psfrag{i}{$I$}
    \psfrag{h}{$h$}
    \psfrag{g1}{$g_1$}
    \subfigure[]{\includegraphics[scale=0.2]{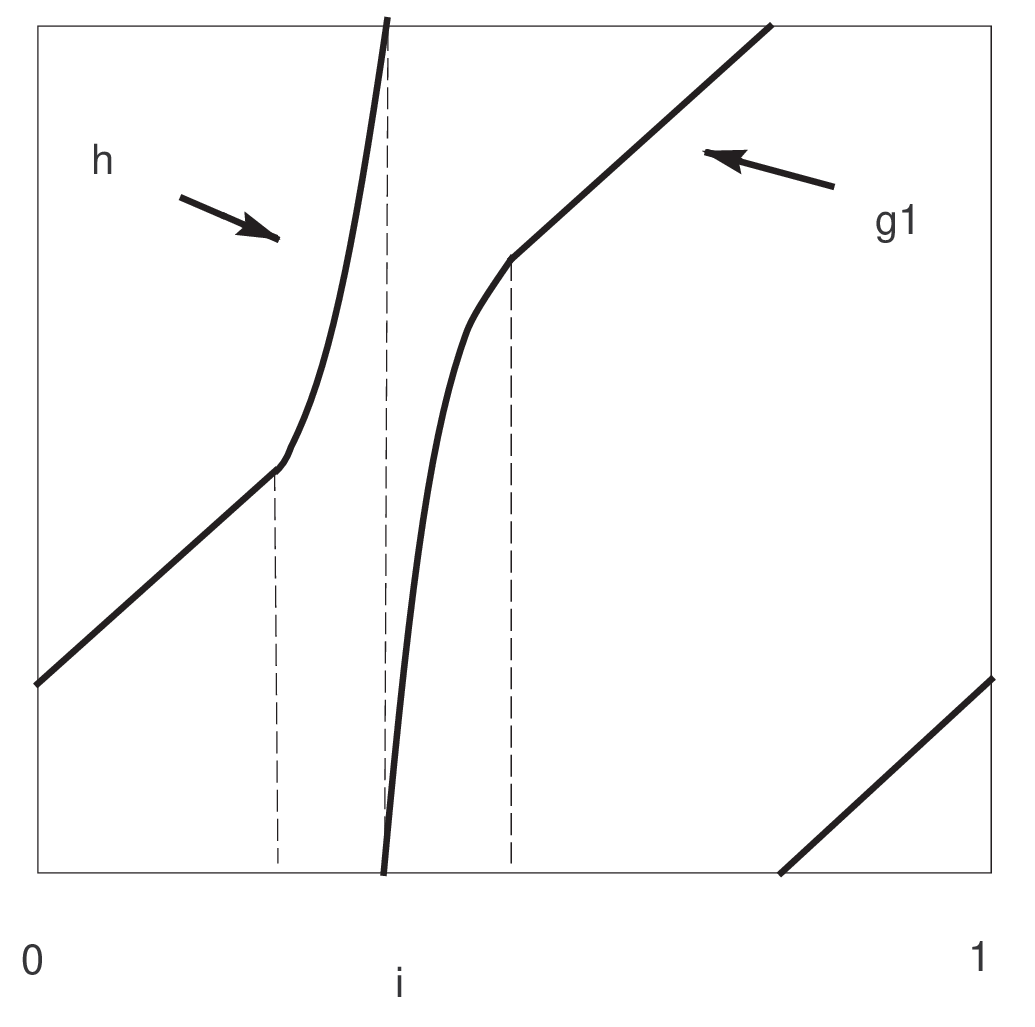}}
    \subfigure[]{\includegraphics[scale=0.2]{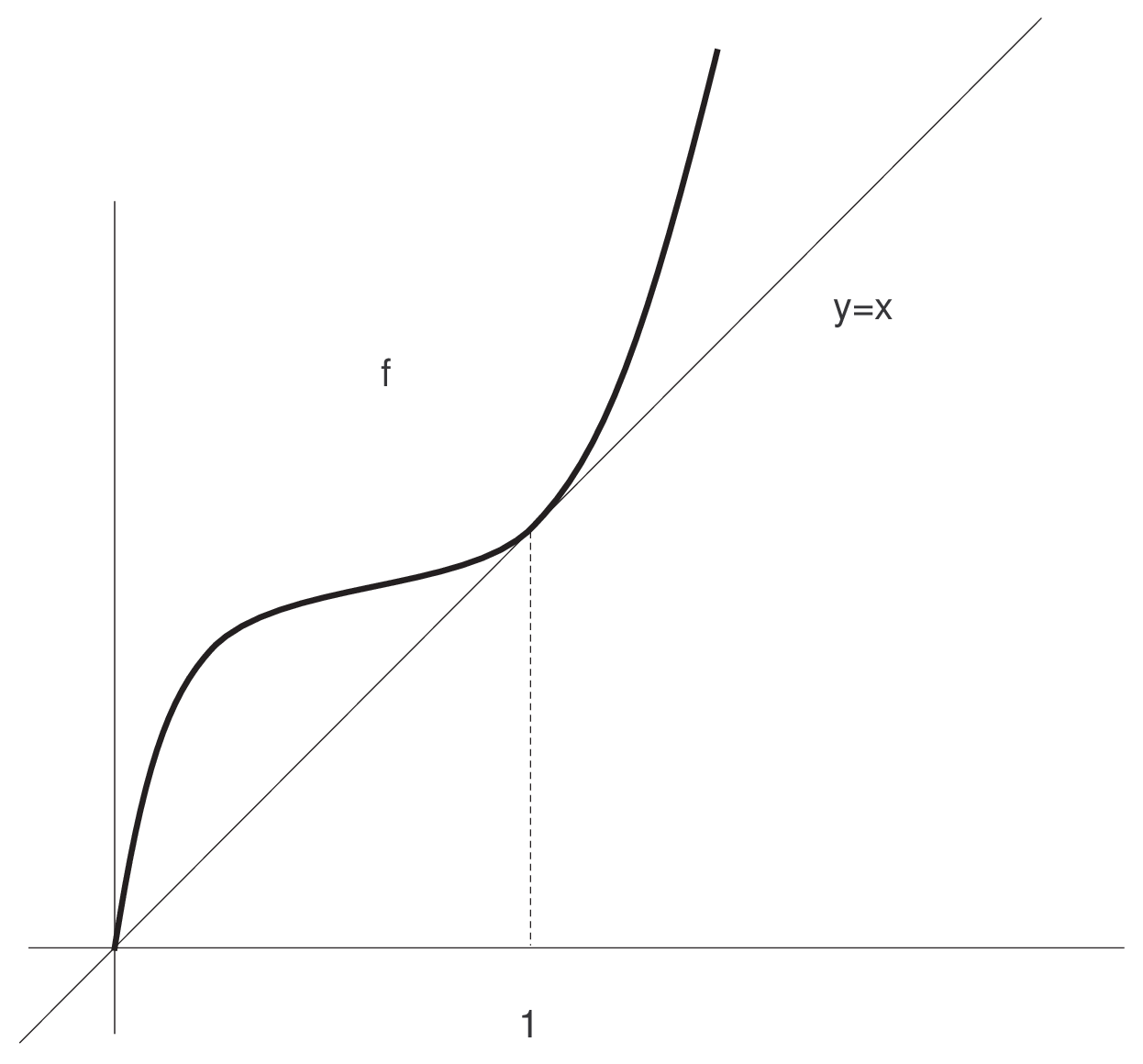}}
    \caption{}
    \label{figura1}
\end{figure}

Note that $f$ has the following properties: 
\begin{enumerate}
\item $K=\{1\}\times K_1$ is compact and  $f(K)=K$.
\item $f$ has no periodic points in $K$, because $K_1\cap \per(g) = \emptyset$.
\item If $\theta$ is periodic for $g$, then $f^n (r,\theta)\to \infty$.
\end{enumerate}
Therefore,  $\per(f) = \emptyset$.

Note also that this example can be made $C^1$, if we use the square of the distance in the definition of $\phi$ and enough regularity for the rest of the functions.

\subsection{A Kan-like example without periodic points}

We have given two examples of covering maps of the annulus without periodic points.
The first one, given in the introduction, is $(r,\theta)\mapsto
(dr,d\theta)$.  Every point is wandering in this example, wandering from one end of the annulus to the other. The second one was given above in Example \ref{e4}: in
this case the map has nonempty nonwandering set.  Nevertheless, every wandering point wanders from one end of the annulus to the other.  Note also that if
both ends are attracting or both repelling, or if the ends of the annulus are exchanged, then the map has the rate (see Theorems \ref{t3} and \ref{sarkoski} in 
Section \ref{Lefschetz}). In
all the examples we have seen so far, if a map does not have the rate, one end is attracting and the other
repelling. Is this necessary?

Let $f : S^1\times (0, 1)\to  S^1\times (0, 1)$, $ (z, x) \mapsto (z^3 , \varphi_z (x))$, where $\varphi_z$ is an increasing
homeomorphism of the interval $(0, 1)$ for each $z$, such that $\varphi_1^n (x) \to 1 $ and
$\varphi_{-1}^n (x)\to 0 $ for every $x$. This implies that the ends of the annulus are neither attracting nor
repelling and that the map has no fixed points. We will show
here that such a map can be constructed without any periodic
point. This example was communicated to us by the referee of \cite{iprx2}.

Let $A = S^1\times \R$ and assume that the map is given by
$f(z, x) = (z^3 , x + t_z )$
where $t_z$ varies continuously with $z$ and the set of numbers $\{t_p\}$ with $p$
periodic of $m_3$ is rationally independent. This means that $f$ has no periodic
points, because
if $\{p_1, \ldots, p_n\}$ is a 
periodic orbit of $z^3$, then $f^n(p_1, x) =(p_1, x +\sum_i t_{p_i})$, but by assumption $\sum_i t_{p_i}\neq 0$.
To construct the function $t_z$
choose a rationally independent sequence $\{a_i : i \geq 0\}$ of positive numbers
and enumerate the periodic points of $m_3$ as $\{p_n : n \geq 0\}$, with $p_0 = 1,
p_1 = -1$. Define by induction a sequence of functions $z\mapsto t^n_z$ beginning with
any continuous function $t^0_z$ such that $t^0_{p_0} = a_0$ and $t^0_{p_1} = 0$. Given $n > 0$
define $t^n_z$ as follows: $t^n_z = t^{n-1}_z$
outside a neighbourhood of $p_n$ not containing any $p_i$ for $i < n$, $ 0 \leq t^{n-1}_z -
t^n_z < 2^{-n} $ and $t^n_{p_n} \in a_n\mathbb{Q}$.

Note that the sequence of functions $z\mapsto t^n_z$ converges uniformly to a function $z\mapsto t_z$ satisfying the required properties.

\subsection{\texorpdfstring{An annulus covering of degree $d$ that is not semiconjugate to $z^d$ acting on $S^1$}{An annulus covering of degree d that is not semiconjugate to the degree-d map on the circle}}\label{contra}

In this section we let $A=(0,1)\times S^1$ be the open annulus, and $m_d:S^1\to S^1$ be the function given by $m_d(z)=z^d$.

We denote by $\gamma_1\wedge\gamma_2$ the algebraic intersection number between two arcs in $A$ whenever it is defined.  In particular, when both arcs are loops, when one of the arcs is proper and the other is a loop or when both
arcs are defined on compact intervals but the endpoints of any of the arcs do not belong to the other arc. By convention, we set $c \wedge \gamma = 1 $ if $c: (0,1) \to A$ and $\gamma : [0,1] \to A$ satisfy:
$$c (t) = (t,1), \ \gamma (t) = (1/2, e^{2\pi i t}).$$

Note that the arc $c$ joins one end of the annulus to the other, and its  intersection number with any loop in $A$ gives the homology class of the loop.  If $\alpha$ is a loop in $A$, 
denote by $j\alpha$ the concatenation of $\alpha$ with itself $j$ times.

\begin{proposition}[\!\!\cite{iprx1}]
\label{condition}
Let $f$ be a covering of the open annulus $A$ and assume that $f$ is semiconjugated to $m_d$. Then the following condition holds:
(*) For each compact set $K\subset A$ there exists a number $C_K$ such that: given $\alpha\subset A$ a simple closed curve, $n\geq 1$ and $j\in [1, \ldots, d^{n-1}]$ then any 
$f^n$-lift $\beta$ of $j \alpha$ with endpoints in $K$ satisfies $|\beta \wedge c|\leq C_K$.
\end{proposition}

\begin{proof}
Let $h$ be the semiconjugacy between $f$ and $m_d$ and let $\tilde h$ and $\tilde f$ be the lifts of $h$ and $f$ verifying $\tilde h \tilde f=d\tilde h$.
Take $a,b\in (0,1)$  such that the set $\tilde K=[a,b]\times \R$ contains $\pi ^{-1} (K)$.
By Theorem \ref{semi} in Section \ref{ac}, there exists a constant $M$ such that $|\tilde h(x,y)-y|\leq M$ whenever $(x,y)\in \tilde K$.

Take $\alpha$, $n$, $j$ and $\beta$ as in the statement. Let $\tilde\beta$ be a lift of
$\beta$ to the universal covering. As the endpoints of $\beta$ belong to $K$, the extreme points $(x_1,y_1)$ and $(x_2,y_2)$ of $\tilde \beta$ belong to $\tilde K$.
Note that it is enough to show that $|y_2-y_1|$ is bounded by a constant $C_K$. We will prove that this holds with $C_K=2M+1$.

Note that $\tilde f^n(x_1,y_1)$ and $\tilde f^n(x_2,y_2)$ are the endpoints of a lift of $j\alpha$ to the universal covering. This means that $|\tilde f^n(x_1,y_1)-\tilde f^n(x_2,y_2)|=(0,j)$. It follows
that
\[
    |\tilde h (\tilde f^n(x_1,y_1))-\tilde h (\tilde f^n(x_2,y_2))|=j.
\]
Then,
$$
|d^n\tilde h (x_1,y_1)-d^n\tilde h (x_2,y_2)|=|\tilde h (\tilde f ^n(x_1,y_1))-\tilde h (\tilde f^n(x_2,y_2))|=j\leq d^n,
$$
so $|\tilde h (x_1,y_1)-\tilde h(x_2,y_2)|\leq 1$.
Finally, using that the endpoints of $\beta$ belong to $K$, it follows that
\[
|y_1-y_2|\leq |y_1-\tilde h (x_1,y_1)|+|\tilde h (x_1,y_1)-\tilde h (x_2,y_2)|+|\tilde h (x_2,y_2)-y_2|\leq 2M+1.\qedhere
\]
\end{proof}

Now we will construct $f$, a covering of the open annulus for which the condition (*) introduced in the previous proposition does not hold, and therefore $f$ is not semiconjugate
to $m_d$.

Let $\{a_n :\ n\in \Z\}$ be an increasing sequence of positive real numbers such that $a_n\to 0$ when $n\to-\infty$ and $a_n\to 1$ when $n\to+\infty$.
Define the annuli $A_n$ as the product $[a_n,a_{n+1}]\times S^1$, for each $n\in \Z$.
Let also $\lambda_n$ be the affine increasing homeomorphism carrying $[0,1]$ onto $[a_n,a_{n+1}]$.
Define $f(x,z)=(\lambda_{n+1}\lambda_n^{-1}x,z^2)$ for every $n\leq -1$, that is, $(x,z)\in \cup_{n<0}A_n$.

Assume $f$ constructed until the annulus $A_{n-2}$ for some $n$ and we will show how to construct the restriction of $f$ to $A_{n-1}$.
We will suppose that $f(a_k,z)=(a_{k+1},z^2)$ for every $k\leq n-1$ and every $z\in S^1$.

Let $\alpha$ be a curve in $A_0$ such that:

\begin{enumerate}
\item
$\alpha$ joins $(a_0,1)$ with $(a_1,1)$.
\item
The lift $\alpha_0$ of $\alpha$ to the universal covering that begins at $(a_0,0)$, ends at $(a_1,n)$.
\item
$\beta:=f^{n-1}(\alpha)$ is simple.
\end{enumerate}

Note that $f^{n-1}$ is already defined in $A_0$. To prove that such an $\alpha$ exists, take first any $\alpha'$ satisfying the first and second conditions. 
Then $f^{n-1}(\alpha')$ is a curve joining $(a_{n-1},1)$ with $(a_n,1)$. Maybe $f^{n-1}(\alpha')$ is not simple, but there exists a simple curve $\beta$
homotopic to $f^{n-1}(\alpha')$ and with the same extreme points. Then define $\alpha$ as the lift of $\beta$ under $f^{n-1}$ that begins at the point $(a_0,1)$.

Choose any simple arc $\beta'$ disjoint from $\beta$ and contained in $A_{n-1}$, joining the points $(a_{n-1},-1)$ and $(a_n,-1)$.
Note that $f^{-(n-1)}(\beta')$ is the union of $2^{n-1}$ curves all disjoint from $\alpha$. Choose any one of these curves and denote it 
$\alpha'$. Note that it does not intersect $\alpha$.
Note also that there is a lift $\alpha'_0$ of $\alpha'$ that begins in a point $(a_0,t)$ and ends at $(a_1,n+t)$ in the universal covering. Now,  choose a
point $Y\in\alpha'$ whose lift $Y'$ in $\alpha'_0$ has second coordinate greater than $n$, and a point $X$ in $\alpha$ whose lift $X'$ in $\alpha_0$ has second
coordinate less than $1/2$.

Note that $f^{n-1}(X)\in\beta$ and $f^{n-1}(Y)\in\beta'$. The complement of $\beta\cup\beta'$ in the interior of $A_{n-1}$ consists of two open discs, each one of which is homeomorphic
to the complement of $s$ in the interior of $A_n$, where $s$ is the segment $\{(x,1)\ :\ a_n<x<a_{n+1}\}.$  So,  it is possible to take a homeomorphism from each of these components
and extend it to the boundary in such a way that the image of $\beta$ is $s$ and the image of $\beta'$ is also $s$, and carrying $X$ and $Y$ to the same point $p\in s$.
If the homeomorphisms are taken carefully, they induce a covering $f$ from $A_{n-1}$ to $A_n$.
Now take a simple closed curve $\gamma$ contained in $A_n$ and with base point $p$. Note that if $j=2^{n}/2$, then the curve $j\gamma$ lifts under $f^n$ to a curve joining $X$ to $Y$.
But the difference between the second coordinates of $Y'$ and $X'$ is greater than $n-1$. It follows that the intersection number of a lift of $j\gamma$ and the arc
$c$ in $A_0$ exceeds $n-1$.

As this can be done for every positive $n$, taking $K=A_0$ in Proposition \ref{condition}, it follows that $f$ does not satisfy condition (*).

\section{Proof of Brouwer's theorem}\label{newbrou}

This paper contains this one tiny original piece of work: a new proof of  Brouwer's fixed point theorem for orientation
preserving plane homeomorphisms.  Actually, of the part that says that period $2$ implies fixed point.

In the 80's Fathi wrote an amazing paper  \cite{fathi}, where he gave an easy-to-understand proof of the following:

\begin{theorem}[\!\!\cite{brou}] Let $f$ be an orientation preserving plane homeomorphism with a non-wandering point.  Then it has a fixed point.
\end{theorem}

More importantly, he reduced the proof of that theorem to proving the following:

\begin{lemma}\label{new}  Let $f:S^2\to S^2$ be an orientation preserving homeomorphism.  If $f$ has a periodic point of period $2$, then it has at least two fixed points.
\end{lemma}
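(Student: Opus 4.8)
The plan is to deduce Lemma \ref{new} from Lemma \ref{indice}, the Lefschetz index criterion for the special curvilinear quadrilateral, using the period $2$ point to produce the geometric configuration needed.

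\smallskip

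First I would set up coordinates. Let $q$ and $f(q)$ be the two points of the period $2$ orbit, with $q\neq f(q)$. Since $f$ is an orientation preserving homeomorphism of $S^2$, I may assume (after a conjugation by a homeomorphism of $S^2$) that $\{q,f(q)\}\cap\fix(f)=\emptyset$ and that $q$ and $f(q)$ both avoid some point at infinity $\infty$, so that $F:=f|_{\R^2}$ is an orientation preserving homeomorphism of the plane with $F(q)=f(q)$, $F(f(q))=q$. The aim is to find one fixed point in $\R^2$; the second one comes for free because a homeomorphism of $S^2$ cannot have exactly one fixed point when it is orientation preserving (the local Lefschetz index at an isolated fixed point of an orientation preserving surface homeomorphism is at most $1$, and the indices sum to $\chi(S^2)=2$; if the fixed set is not finite we are done anyway). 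So everything reduces to: an orientation preserving plane homeomorphism $F$ with a $2$-periodic orbit has a fixed point.

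\smallskip

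Next I would run Fathi's reduction idea in reverse: produce a Brouwer-type ``translation arc'' obstruction and then convert it to an index computation. Concretely, join $q$ to $p:=F(q)$ by a simple arc $\alpha_0$ whose interior misses $\fix(F)$ (possible unless we are already done); by a standard Brouwer lemma, after shrinking $\alpha_0$ we may assume $\alpha_0$ is a \emph{translation arc}, i.e. $\alpha_0\cap F(\alpha_0)=\{p\}$. If $F$ had no fixed point at all, Brouwer's plane translation theorem machinery (or, at the elementary level, the ``free disc'' / Franks' lemma argument) forces the orbit of $\alpha_0$ under $F$ to be a properly embedded line, which contradicts the fact that $q$ and $F^2(q)=q$ are both on it with $F^2(\alpha_0)\supset\{q\}$ — the orbit cannot close up. To package this \emph{without} invoking the full plane translation theorem, I would instead build explicitly the four curves $\alpha,\beta,\gamma,\delta$ of Lemma \ref{indice}: take $\alpha,\beta$ to be two disjoint properly embedded lines separating $p$ from $q$, chosen so that $F$ pushes $\alpha$ toward $\beta$ and $\beta$ toward $\alpha$ (this is exactly the ``$F$ swaps the two sides'' behaviour coming from $F(q)=p$, $F(p)=q$ together with a Jordan-curve argument on a large disc containing both points and their small invariant neighbourhoods), and take $\gamma,\delta$ two disjoint properly embedded lines transverse to both, chosen far out so that $F$ pushes each strictly to one side — this uses only that $F$ is a homeomorphism of the plane and that $\gamma,\delta$ can be taken outside a huge disc $D$ with $F(D)\cup D$ bounded, exactly as in the proof of Lemma \ref{aldo}. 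Then the curvilinear quadrilateral $\Gamma$ they bound has $i(\Gamma,F)=\pm1$ by Lemma \ref{indice} (the sign fixed by orientation), and Lemma \ref{aldo} gives a fixed point of $F$ inside $\Gamma$.

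\smallskip

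The main obstacle I expect is the middle step: arranging the two ``horizontal'' lines $\alpha,\beta$ so that $F$ genuinely interchanges their complementary sides in the strong pointwise sense demanded by Lemma \ref{indice} ($F(\Gamma\cap\alpha)$ lies in the side of $\alpha$ containing $\beta$, and symmetrically), while keeping them disjoint and properly embedded. The natural candidates are level sets of a ``Lyapunov-type'' function adapted to the pair $\{p,q\}$, or the boundaries of a suitable pair of nested topological discs $U\ni p$, $F(U)\subset$ (disc around $q$); making these honestly proper lines rather than circles, and checking the four side conditions simultaneously with the analogous conditions for $\gamma,\delta$, is where the real care is needed. Once that configuration is in hand, the index computation and the passage from one fixed point to two are routine given Lemmas \ref{aldo} and \ref{indice} and the Lefschetz index bound on $S^2$.
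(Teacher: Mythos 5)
Your reduction to a plane homeomorphism $F:\R^2\to\R^2$ swapping two points, and your plan to build the four‑line configuration of Lemma~\ref{indice}, runs into a genuine gap at the step where you introduce $\gamma,\delta$. You claim that by taking $\gamma,\delta$ ``far out'' (outside a large disc $D$ with $D\cup F(D)$ bounded), $F$ will push each of $\Gamma\cap\gamma$, $\Gamma\cap\delta$ strictly past the other, ``using only that $F$ is a homeomorphism of the plane.'' This is false: a plane homeomorphism gives no a priori control on the behaviour of far‑out proper lines. For instance, if $F$ is the identity outside a disc containing $p$ and $q$ (such $F$ exist with $F(p)=q$, $F(q)=p$, orientation preserving), then $F(\gamma)=\gamma$ for every far‑out line, and the required side conditions of Lemma~\ref{indice} fail. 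Of course such an $F$ has plenty of fixed points, so the lemma is still true — but your proposed configuration does not produce them, and there is no way to recover: in the plane there is simply nothing forcing $F$ to interchange the sides of a far‑out pair of lines. The analogy you draw with the proof of Lemma~\ref{aldo} does not help either; there the small circle is chosen in a \emph{free} disc around a \emph{non‑fixed} point under a fixed‑point‑free hypothesis, which is a completely different mechanism.

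The paper avoids this by \emph{not} passing to the one‑punctured plane. Instead it removes \emph{both} periodic points, forming the annulus $A=S^2\setminus\{p,q\}\cong S^1\times(0,1)$, and lifts $f|_A$ to the universal cover $\tilde A=\R\times(0,1)$. The crucial algebraic fact — the one your construction is missing — is that orientation preservation on $S^2$ together with the swap $f(p)=q$, $f(q)=p$ forces every lift $F$ to anticommute with the deck translation: $F(x+(1,0))=F(x)-(1,0)$. This single relation is what controls the far‑out ``vertical'' lines $\gamma,\delta$ in $\tilde A$: points far to the left are sent far to the right and vice versa, so the required side conditions of Lemma~\ref{indice} hold automatically once $\gamma,\delta$ are taken far enough out, with $\alpha,\beta$ the lifts of small loops around $p$ and $q$ (which swap because $f$ swaps $p$ and $q$). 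This also produces the second fixed point cleanly: the fixed points of $F$ and of $F+(1,0)$ project to distinct points of $A$. Your alternative argument for the second fixed point (bounding the local index by $1$ for orientation‑preserving surface homeomorphisms) is a correct but heavier tool than needed. The fix for your write‑up is therefore to replace the ambient plane $\R^2$ by the universal cover of the twice‑punctured sphere and to extract and use the anticommutation relation; without it, the $\gamma,\delta$ step cannot be made to work.
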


The purpose of this section is to give an even simpler proof of this previous lemma using Lemma \ref{indice} in Section \ref{Lefschetz}.

\begin{proof} Let $p\in S^2$ such that $f(p)=q$, $f(q)=p$ and let $A=S^2\backslash \{p,q\}$.  We identify $A$ with $S^1\times (0,1)$ and its universal covering
 $\tilde A$ with $\R\times (0,1)$.  Note that as $f$ preserves orientation, $F(x+(1,0))=F(x)-(1,0)$ for any lift $F:\tilde A\to \tilde A$.
 
 Note also that it is enough to show that any lift $F:\tilde A\to \tilde A$ of $f$ has a fixed point.  Indeed, if $F(x)=x$ and $(F+(1,0))(w)=w$, an easy computation shows that $x$ and $w$ project to
different points in $A$ (all of this is already contained in Fathi's paper).  Now, to see that any lift $F:\tilde A\to \tilde A$ of $f$ has a fixed point, we use Lemma \ref{indice}
in Section \ref{Lefschetz} to obtain a simple closed curve $\Gamma$ such that  $i(\Gamma, F)=1$. The lines $\alpha$ and $\beta$ can be taken to be the lifts of two simple closed loops around $p$ and $q$ respectively, and the lines $\gamma$ and
$\delta$ can be chosen as proper lines connecting both ends of $\tilde A$ that are sufficiently far to the left and right respectively.  Now, $f(p)=q$, $f(q)=p$ and 
$F(x+(1,0))=F(x)-(1,0)$ imply that the hypotheses of the lemma are satisfied, and therefore $i(\Gamma, F)=1$.  Now, by Lemma \ref{aldo}, $F$ has a fixed point.
\end{proof}

\section{Concluding remarks}

This survey shows that we are still far from understanding the precise obstructions preventing a map $f:S^2\to S^2$ from satisfying the growth rate inequality. Instead, 
we currently have an intriguing assortment of methods, ideas, and results that have yet to align into a coherent whole.

A central problem is understanding fixed point properties on indecomposable continua, a question as old as the field itself. Equally important is determining whether (and how) dynamics defined on 
such continua can be extended to their ambient spaces.

Finally, it would be very interesting to establish a method to link the theory of plane homeomorphisms to general sphere endomorphisms, like it was done
naturally with parabolic orbifolds.

{\small

}

\end{document}